\documentclass[12pt]{amsart}



\newcommand{\elle}[1]{L^{#1}(\Omega)}
\newcommand{\huz}{H^1_0(\Omega)}
\newcommand{\sob}[2]{W^{#1}_{#2}(\Omega)}

\newcommand{\enne}{\mathbb{N}}

\newcommand{\erre}{\mathbb{R}}

\newcommand{\io}{\int\limits_{\Omega}}
\newcommand{\ik}{\int\limits_{\{|\un| \geq k\}}}

\newcommand{\dive}{{\rm div}}
\newcommand{\norma}[2]{\|#1\|_{\lower 4pt \hbox{$\scriptstyle #2$}}}
\newcommand{\fn}{f_{n}}
\newcommand{\un}{u_{n}}


\newcommand{\arrstre}{\renewcommand{\arraystretch}{2}}

\newcommand{\disp}{\displaystyle}
\newcommand{\ba}{\begin{array}}
\newcommand{\ea}{\end{array}}
\newcommand{\be}{\begin{equation}}
\newcommand{\ee}{\end{equation}}


\newcommand{\rife}[1]{(\ref{#1})}
\newtheorem{example}{\sc Example}

\newtheorem{lemma}[example]{\sc Lemma}

\newtheorem{theo}[example]{\sc Theorem}

\begin{document}

\title[Noncoercive elliptic problems]{
Existence of solutions for some \\
noncoercive elliptic problems \\
involving
derivatives of nonlinear terms}
\author{Lucio Boccardo,  Gisella Croce and Luigi Orsina}
\address{L.B. -- Dipartimento di Matematica, ``Sapienza'' Universit\`{a} di Roma,
P.le A. Moro 2, 00185 Roma (ITALY)}
\email{boccardo@mat.uniroma1.it}
\address{G.C. -- Laboratoire de Math\'ematiques Appliqu\'ees du Havre, Universit\'e du Havre,
25, rue Philippe Lebon, 76063 Le Havre (FRANCE)}
\email{gisella.croce@univ-lehavre.fr}
\address{L.O. -- Dipartimento di Matematica, ``Sapienza'' Universit\`{a} di Roma,
P.le A. Moro 2, 00185 Roma (ITALY)}
\email{orsina@mat.uniroma1.it}
\subjclass{35B45, 35D05, 35D10, 35J65, 35J70}
\keywords{degenerate ellipticity, boundary value problem,  distributional solutions, entropy solutions}
\begin{abstract}
We study a nonlinear equation with an elliptic ope\-rator having degenerate coercivity. We prove the existence of a  $\sob{1,1}{0}$  solution which is distributional or entropic, according to the growth assumptions on a lower order term in divergence form. 
\end{abstract}

\maketitle

\rightline{\it To Ildefonso:}

\rightline{\it But of all these friends and lovers}

\rightline{\it There is no one compares with you \cite{beatles}}

\section{Introduction and statements of the results}
In a joint paper with Ildefonso Diaz 
 the authors of \cite{bgdm} studied boundary value problems of the type
\be\left\{
\ba{cl}
\disp
A(u)= f - \dive(\Phi(u)) & \mbox{in $\Omega $,}\\
\hfill u = 0 \hfill & \mbox{on $\partial\Omega$,}
\ea
\right.
\label{1}
\ee
where
\begin{equation}
\label{omega}
\hbox{$\Omega$ is a bounded, open subset of $\erre^{N}$, with $N > 2$,}
\end{equation}
\begin{equation}
\label{coer}
 A\; \hbox{ is a coercive nonlinear differential operator,}
\end{equation}
  acting on $\sob{1,p}{0} $,
$1 < p <\infty$,  defined by 
 $A(v)=-\dive(a(x,v,\nabla v))$, which satisfies the classical Leray-Lions assumptions,
 $$
 f\in\sob{-1,p'}{},  
 $$
\begin{equation}
\label{Phi}
\hbox{$\Phi$ belongs to $C^0(\erre,\erre^N)$.}
\end{equation}
The main feature of  problem (\ref{1}) is that 
no growth assumption was assumed on $\Phi$.

Despite that, the authors proved the existence of a   solution, in the following sense. Let $h\in C^1_c(\erre)$, then $u$ is a renormalized solution to problem (\ref{1}) if  
 \begin{equation}
\label{rino}
\io[a(x,u,\nabla u)-\Phi(u)]\cdot\nabla[h(u)\phi]  
= \io\,f[h(u)\phi] ,
\quad\forall\;\phi\in\mathcal{D}(\Omega).
\end{equation}
In \cite{BCapri} the above problem is studied  under    the  weaker assumption that $f\in L^1(\Omega)$, proving the existence of 
a solution in a slightly different sense.
For $k\geq 0$ and $s\in \erre$, let $T_k(s)=\max\{-k,\min\{s,k\}\}$.
Then $u$ is an entropy solution to (\ref{1}) if $T_k(u)$ belongs to $H^1_0(\Omega)$ for every $k>0$ and for every $\varphi \in H^1_0(\Omega)\cap L^{\infty}(\Omega)$
 \begin{equation}
\label{entro}
 \io[a(x,u,\nabla u)- \Phi(u)]\cdot \nabla T_k(u-\varphi) 
\leq \!\io\! f T_k(u-\varphi).
\end{equation}

In this note we will use the latter approach to  prove the existence of a
 $W^{1,1}_0(\Omega)$  solution to the following
degenerate elliptic  problem:
\be\left\{
\arrstre
\ba{cl}
\disp
-\dive\bigg(\frac{a(x)\,\nabla u}{(1+b(x)|u|)^{2}}\bigg) + u = f - \dive(\Phi(u)) & \mbox{in $\Omega $,}\\
\hfill u = 0 \hfill & \mbox{on $\partial\Omega $.}
\ea
\right.
\label{lineare}
\ee
Here   $a(x),\,b(x)$ are  measurable functions such that
\begin{equation}
\label{ab}
0 <\alpha\leq a(x) \leq\beta,\quad  0 \leq b(x) \leq B,
\end{equation}
  with  $\alpha,\,\beta\in\erre^+$, $B \in \erre$  and
\begin{equation}
\label{f2}
f(x) \hbox{ belongs to } \elle2.
\end{equation}
We point out that the main difference between  the boundary value problems \rife{1} and \rife{lineare} is that the coercivity assumption \rife{coer} is not satisfied by the differential operator in \rife{lineare}.

 We are going to prove that problem \rife{lineare} has a  solution $u$ belonging to the non-reflexive Sobolev space $W^{1,1}_{0}(\Omega)$.
We point out that this is quite unusual for an elliptic problem. According to the growth of $\Phi$, $u$ will be either a distributional or an entropy solution.

We recall that the problems
\be
\left\{
\arrstre
\ba{cl}
\disp
-\dive\bigg(\frac{a(x)\,\nabla u}{(1+|u|)^{\theta}}\bigg) = f & \mbox{in $\Omega $,}\\
\hfill u = 0 \hfill & \mbox{on $\partial\Omega$,}
\ea
\right.
\label{p-teta}
\ee
and 
\be\label{pbBCOlincei}
\left\{
\arrstre
\ba{cl}
\disp
-\dive\bigg(\frac{a(x)\nabla u}{(1+|u|)^{2}}\bigg) + u = f& \mbox{in $\Omega $,}\\
\hfill u = 0 \hfill & \mbox{on $\partial\Omega$,}
\ea
\right.
\ee
have been studied   in \cite{bdo}, \cite{bb}, \cite{croce} and \cite{bco} proving existence results.
In this note we prove that the same results  hold even in the presence of a term in divergence form, that is, for problem (\ref{lineare}).

We are going to prove the following theorems, according to the growth of $\Phi$.
 \begin{theo}\label{thm-lineare}\sl
Assume \rife{omega}, \rife{Phi}, \rife{ab} and \rife{f2} and that there exists a positive $C$ such that
\begin{equation}\label{crescitaPhi}
|\Phi(t)|\leq C\,|t|^{2}\, \quad \forall\,\,t\in \erre.
\end{equation}
Then there exists a distributional
solution $u \in W^{1,1}_0(\Omega)\cap L^2(\Omega)$ to problem (\ref{lineare}), in the sense that
$$
\io \frac{a(x)\,\nabla u\cdot \nabla \varphi}{(1+b(x)|u|)^{2}} + \io u\,\varphi = \io f\,\varphi +\io \Phi(u)\cdot\nabla \varphi\,,
$$
for all $\varphi \in W^{1,\infty}_0(\Omega)$.
\end{theo}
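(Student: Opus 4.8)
The plan is to obtain $u$ as a limit of solutions of coercive approximating problems. For $n\in\enne$ I would consider
\[
-\dive\!\left(\frac{a(x)\,\nabla u_n}{(1+b(x)|T_n(u_n)|)^{2}}\right) + u_n = f - \dive\big(\Phi(T_n(u_n))\big)\quad\text{in }\Omega,
\]
with $u_n\in\huz$: truncating inside the denominator makes the principal part uniformly coercive, its coefficient being bounded below by $\alpha/(1+Bn)^{2}$, while truncating inside $\Phi$ makes the divergence datum bounded and continuous. For each fixed $n$ the operator is then coercive with a bounded lower order term, so existence of $u_n\in\huz$ follows from the classical Leray--Lions theory together with a Schauder fixed point argument (equivalently, from the result recalled for problem \rife{1}).

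The heart of the a priori estimates is the observation that, for fixed $n$, the field $s\mapsto\Phi(T_n(s))$ admits a primitive, so that $\Phi(T_n(u_n))\cdot\nabla[\eta(u_n)]$ is a perfect divergence for every smooth $\eta$, whose integral over $\Omega$ vanishes since $u_n\in\huz$. Testing with $u_n$ therefore eliminates the $\Phi$ term and, by \rife{f2}, gives
\[
\io\frac{a\,|\nabla u_n|^{2}}{(1+b|T_n(u_n)|)^{2}} + \io u_n^{2} = \io f\,u_n \le \|f\|_{L^2(\Omega)}\,\|u_n\|_{L^2(\Omega)},
\]
whence $\|u_n\|_{L^2(\Omega)}\le\|f\|_{L^2(\Omega)}$ and the degenerate energy is bounded uniformly in $n$. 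The $W^{1,1}_0$ bound is then immediate from Cauchy--Schwarz,
\[
\io|\nabla u_n| \le \left(\io\frac{|\nabla u_n|^{2}}{(1+b|T_n(u_n)|)^{2}}\right)^{1/2}\left(\io(1+b|u_n|)^{2}\right)^{1/2},
\]
the last factor being controlled by the $L^2$ bound; testing with $T_k(u_n)$ shows moreover that $T_k(u_n)$ is bounded in $\huz$ for each fixed $k$.

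These bounds yield a subsequence with $u_n\to u$ a.e.\ (by the compact embedding of $W^{1,1}_0(\Omega)$ into $L^q(\Omega)$, $q<\frac{N}{N-1}$) and $T_k(u_n)\rightharpoonup T_k(u)$ in $\huz$, with $u\in W^{1,1}_0(\Omega)\cap L^2(\Omega)$. I would next upgrade this to strong convergence in $L^2(\Omega)$ by testing the approximate equation with the tail $G_k(u_n)=u_n-T_k(u_n)$: the $\Phi$ term again disappears, the principal part is nonnegative, and the zeroth order term absorbs the rest, giving, after splitting $f$ over $\{|f|\le k/2\}$ and $\{|f|>k/2\}$, an estimate of the form
\[
\|G_k(u_n)\|_{L^2(\Omega)} \le \left(\int_{\{|f|>k/2\}}f^{2}\right)^{1/2},
\]
uniform in $n$ and infinitesimal as $k\to\infty$. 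This is precisely the equi-integrability of $\{u_n^{2}\}$, so Vitali's theorem gives $u_n\to u$ strongly in $L^2(\Omega)$; combined with the quadratic bound \rife{crescitaPhi} this forces $\Phi(T_n(u_n))\to\Phi(u)$ in $L^1(\Omega)$, and in particular $\Phi(u)\in L^1(\Omega)$, which is what makes the weak formulation meaningful against test functions in $W^{1,\infty}_0(\Omega)$.

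The main obstacle is the passage to the limit in the principal part, which I expect to be the hardest step because the operator is noncoercive and degenerate and we work in the non-reflexive space $W^{1,1}_0(\Omega)$: it requires the almost everywhere convergence $\nabla u_n\to\nabla u$. I would establish it by the Boccardo--Murat technique, testing with $T_\delta(u_n-T_k(u))$ and using the uniform lower bound $\alpha/(1+Bk)^{2}$ of the coefficient on $\{|u_n|<k\}$ to deduce that $\nabla T_k(u_n)\to\nabla T_k(u)$ strongly in $L^2(\Omega)$, hence $\nabla u_n\to\nabla u$ a.e.; here the strong $L^2$ convergence of $u_n$ and the $L^1$ convergence of $\Phi(T_n(u_n))$ already obtained are what allow the zeroth order and divergence terms to be absorbed. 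Finally the flux $a\,\nabla u_n/(1+b|T_n(u_n)|)^{2}$ is equi-integrable, since its integral over any measurable $E$ is at most $C|E|^{1/2}$ by Cauchy--Schwarz and the energy bound, and it converges a.e.\ to $a\,\nabla u/(1+b|u|)^{2}$; Vitali's theorem then gives its convergence in $L^1(\Omega)$, and I can pass to the limit termwise in the weak formulation tested against $\varphi\in W^{1,\infty}_0(\Omega)$ to obtain the stated identity.
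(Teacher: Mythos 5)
Your proposal is correct and follows essentially the same route as the paper: approximation by coercive truncated problems, a priori estimates in $L^2$ and in the degenerate energy obtained by killing the divergence term through the primitive/divergence-theorem trick, a $W^{1,1}_0$ bound by Cauchy--Schwarz, strong $L^2$ convergence via equi-integrability and Vitali, and again Vitali's theorem for $\Phi(u_n)$ using the quadratic growth \rife{crescitaPhi}. The differences are minor: the paper tests with $|u_n|\psi_{i,k}(u_n)$ rather than with $u_n$ and $G_k(u_n)$, approximates $f$ by bounded data so as to get bounded approximate solutions, and delegates the passage to the limit in the principal part to \cite{bco}, which you instead spell out via the Boccardo--Murat almost-everywhere gradient convergence.
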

In the case where assumption (\ref{crescitaPhi}) is not satisfied, one can prove the existence of more general solutions,
that is, renormalized solutions as in \cite{bgdm}, or entropy solutions as in \cite{BCapri}. Since the proof of existence of entropy solutions is easier (due to the fact that the weak convergence proved in Lemma \ref{convergenze} is enough),
 we will only prove the second result. Note however that the two concepts of solutions are equivalent (at least in the framework of Lebesgue data, see \cite{DMOP}) so that one can recover the existence of a renormalized solution from the existence of an entropy one.

\begin{theo}\label{entropysolutions}\sl
Assume \rife{omega}, \rife{Phi}, \rife{ab} and \rife{f2}.
Then there exists an entropy
solution $u \in W^{1,1}_0(\Omega)\cap L^2(\Omega)$ to problem (\ref{lineare}), in the sense that
$T_k(u)$ belongs to $H^1_0(\Omega)$ for every $k>0$ and
$$
\io \frac{a(x)\,\nabla u\cdot \nabla T_k(u-\varphi)}{(1+b(x)|u|)^{2}} + \io u\,T_k(u-\varphi)
$$
$$
\leq \io f\,T_k(u-\varphi)
+\io \Phi(u)\cdot\nabla T_k(u-\varphi)\qquad \forall\,\varphi \in H^1_0(\Omega)\cap L^{\infty}(\Omega)\,.
$$
\end{theo}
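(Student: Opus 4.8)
The plan is to obtain $u$ as the almost everywhere limit of solutions $\un$ of coercive approximating problems, to derive uniform bounds in $\sob{1,1}{0}\cap\elle2$ and in $\huz$ for the truncations, and finally to pass to the limit in the entropy inequality, using only the weak convergence of Lemma~\ref{convergenze}. To start I would freeze both the degeneracy of the operator and the growth of $\Phi$ by truncation: for $n\in\enne$, consider
\[
-\dive\Big(\frac{a(x)\,\nabla\un}{(1+b(x)|T_n(\un)|)^{2}}\Big)+\un=f-\dive\big(\Phi(T_n(\un))\big),\qquad\un\in\huz.
\]
Since $|T_n(\un)|\le n$, the weight is bounded below by $\alpha/(1+Bn)^{2}$, so the principal part is coercive on $\huz$; moreover $\Phi(T_n(\cdot))$ is bounded and continuous, and $f\in\elle2$ lies in the dual of $\huz$. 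Existence of $\un$ for each fixed $n$ then follows from the classical Leray--Lions theory.

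The key to uniform estimates is that $\Phi$ disappears when one tests with $\un$ itself. Writing $G_i(s)=\int_0^{s}\Phi_i(T_n(\tau))\,d\tau$, one has the pointwise identity $\Phi(T_n(\un))\cdot\nabla\un=\dive\big(G_1(\un),\dots,G_N(\un)\big)$; as each $G_i$ is Lipschitz with $G_i(0)=0$ and $\un\in\huz$, the divergence theorem gives $\io\Phi(T_n(\un))\cdot\nabla\un=0$. Testing with $\un$ and applying Young's inequality to $\io f\,\un$ yields, uniformly in $n$,
\[
\io\frac{a(x)\,|\nabla\un|^{2}}{(1+b(x)|T_n(\un)|)^{2}}+\io\un^{2}\le\io f^{2}.
\]
From this I would extract the two bounds I need: a $\sob{1,1}{0}$ bound, via Cauchy--Schwarz
\[
\io|\nabla\un|\le\Big(\io\frac{|\nabla\un|^{2}}{(1+b(x)|T_n(\un)|)^{2}}\Big)^{1/2}\Big(\io(1+B|\un|)^{2}\Big)^{1/2}\le C,
\]
the last factor being controlled by $\norma{\un}{\elle2}$; and, for each fixed $k$, an $\huz$ bound on $T_k(\un)$, since on $\{|\un|<k\}$ the weight is at least $\alpha/(1+Bk)^{2}$, whence $\io|\nabla T_k(\un)|^{2}\le C_k$.

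These bounds, through Lemma~\ref{convergenze}, furnish a subsequence with $\un\to u$ almost everywhere and weakly in $\elle2$, with $u\in\sob{1,1}{0}\cap\elle2$ and $T_k(\un)\rightharpoonup T_k(u)$ in $\huz$. To reach the entropy inequality I would test the approximate equation with $T_k(\un-\varphi)$, where $\varphi\in\huz\cap\elle\infty$. The decisive localization is that on $\{|\un-\varphi|<k\}$ one has $|\un|\le k+\norma{\varphi}{\elle\infty}=:M$, so that for $n>M$ both the weight and $\Phi(\un)=\Phi(T_n(\un))$ stay bounded precisely on the only set where $\nabla T_k(\un-\varphi)$ does not vanish.

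The main obstacle is the limit of the principal term $\io a(x)(1+b|T_n(\un)|)^{-2}\,\nabla\un\cdot\nabla T_k(\un-\varphi)$, a weakly converging gradient multiplied by the nonlinear weight. I would split it into the nonnegative quadratic part $\io a(1+b|T_n(\un)|)^{-2}|\nabla\un|^{2}\chi_{\{|\un-\varphi|<k\}}$ and the cross term carrying $\nabla\varphi$; convexity of the integrand in the gradient, together with $T_M(\un)\rightharpoonup T_M(u)$ in $\huz$ and the almost everywhere boundedness of the weight on the localization set, gives weak lower semicontinuity of the quadratic part, and this is exactly what produces the inequality sign of the entropy formulation. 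The cross term, the $\Phi$ term and the lower-order terms pass to the limit as equalities by the almost everywhere convergence of $\un$ and dominated convergence on $\{|\un-\varphi|<k\}$. Subtracting the limit of $\io\un\,T_k(\un-\varphi)$ then yields the stated inequality. This is the step where only weak convergence is required, which is why the entropy result is easier than the renormalized one.
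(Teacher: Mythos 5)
Your overall strategy---approximate, derive $\elle2$ and truncation estimates, extract an a.e.\ limit, then pass to the limit in the inequality obtained by testing with $T_k(\un-\vp)$, handling the quadratic part of the operator by weak lower semicontinuity and the $\Phi$ term by the localization $|\un|\le k+\norma{\vp}{\elle\infty}=M$ on $\{|\un-\vp|<k\}$---is the same as the paper's. The approximation scheme differs: the paper approximates $f$ by $\fn\in\elle\infty$, truncates at level $M_n=\norma{\fn}{\elle\infty}+1$, and then shows via the test function $(|w|-\norma{\fn}{\elle\infty})_+\,{\rm sgn}(w)$ that the truncations are inactive, so that $\un$ is a \emph{bounded} solution of the genuine untruncated problem \rife{ppn_0}; you keep $f$, truncate the weight and $\Phi$ at level $n$, and deactivate the truncations only in the limit on the localization set. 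Your variant is workable for the entropy theorem precisely because $\nabla T_k(\un-\vp)$ is supported where $|\un|\le M$, and your use of the divergence theorem to kill the term $\io\Phi(T_n(\un))\cdot\nabla\un$ is correct.

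The gap is in the passage from your a priori bounds to the convergences you claim. You invoke Lemma \ref{convergenze}, but that lemma concerns the paper's sequence, and its proof rests on the refined estimates \rife{aa} and \rife{qq} (obtained by testing with $|\un|\,\psi_{i,k}(\un)$), which give equi-integrability of $\{|\un|^2\}$ and hence, through exactly the Cauchy--Schwarz splitting you wrote, equi-integrability of $\{|\nabla\un|\}$; this is what yields weak compactness in the non-reflexive space $\sob{1,1}{0}$ and the membership $u\in\sob{1,1}{0}$. Your single test with $\un$ gives only \emph{boundedness} in $\sob{1,1}{0}\cap\elle2$, which does provide a.e.\ convergence (by compactness of the embedding into $\elle1$) and weak $\elle2$ convergence, but a bounded sequence in $\sob{1,1}{0}$ may converge to a $BV(\Omega)$ function that is not in $\sob{1,1}{}$. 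To close the gap, either reproduce estimate \rife{aa} for your sequence (the test function $|\un|\,\psi_{i,k}(\un)$ still annihilates the $\Phi$ term by the divergence theorem), or argue directly from your truncation bounds: $\io|\nabla T_k(u)|\le\liminf_n\io|\nabla T_k(\un)|\le\liminf_n\io|\nabla\un|\le C$ uniformly in $k$, whence $T_k(u)\to u$ in $\sob{1,1}{}$ as $k\to\infty$ and $u\in\sob{1,1}{0}$. Everything else in your limit passage (strong-times-weak convergence for the cross term and the $\Phi$ term, lower semicontinuity for the quadratic part, Vitali or dominated convergence for the zero-order terms) is sound and matches the paper.
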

\section{Proofs of the results}
To prove our existence results, we begin by approximating the boundary value problem \rife{lineare}. Let $\{\fn\}$ be a sequence of $\elle\infty$ functions such that $\fn$ strongly converges to $f$ in $\elle2$, and $|\fn| \leq |f|$ for every $n$ in $\enne$.
\begin{lemma}
There exists
a solution $\un$ in $\huz \cap \elle\infty$ of
\begin{equation}\label{ppn_0}
\left\{
\arrstre
\ba{cl}
\disp
-\dive\bigg(\frac{a(x)\,\nabla \un}{(1+b(x)|\un|)^{2}}\bigg) + \un = \fn-\dive (\Phi(u_n)) & \mbox{in $\Omega $,}\\
\hfill \un = 0 \hfill & \mbox{on $\partial\Omega $.}
\ea
\right.
\end{equation}
\end{lemma}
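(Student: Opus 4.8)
The plan is to recover existence from standard theory after restoring uniform ellipticity, the only genuine obstacle being that the principal coefficient $a(x)/(1+b(x)|\un|)^{2}$ is \emph{not} bounded below away from zero: as the solution grows it degenerates to zero, so neither the Lax--Milgram lemma nor the Leray--Lions theorem applies directly to \rife{ppn_0}. To get around this I fix $n$ and, for a parameter $K>0$, I would first solve the truncated problem
\be
-\dive\bigg(\frac{a(x)\,\nabla u}{(1+b(x)|T_K(u)|)^{2}}\bigg) + u = \fn-\dive (\Phi(T_K(u)))\quad\mbox{in }\Omega,\quad u=0\ \mbox{on }\partial\Omega,
\label{truncated}
\ee
whose principal coefficient now satisfies $\alpha(1+BK)^{-2}\le a(x)(1+b(x)|T_K(u)|)^{-2}\le\beta$ and is therefore uniformly elliptic, while $\Phi(T_K(u))$ is bounded because $\Phi$ is continuous and $|T_K(u)|\le K$.

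Problem \rife{truncated} is a coercive, pseudo-monotone boundary value problem: the field $a(x,s,\xi)=a(x)(1+b(x)|T_K(s)|)^{-2}\,\xi-\Phi(T_K(s))$ satisfies the Leray--Lions growth, coercivity and monotonicity conditions (monotonicity in $\xi$ holds because the $\Phi$ part does not depend on $\xi$), and the zeroth order term $u$ is monotone with the right sign. Hence \rife{truncated} admits a solution $u\in\huz$, either by the Leray--Lions existence theorem or, equivalently, by Schauder's theorem applied to the map $w\mapsto u$ sending $w\in\elle2$ to the unique $\huz$ solution of the \emph{linear} problem obtained by freezing $w$ in the coefficient and in $\Phi$: the estimate obtained by testing with $u$ bounds $u$ in $\huz$ uniformly in $w$, Rellich's theorem gives compactness in $\elle2$, and continuity of the map follows from the dominated convergence theorem.

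The crucial step, and the one that removes the truncation, is an $\elle\infty$ estimate independent of $K$. Testing \rife{truncated} with $G_s(u)=(u-s)^{+}$ for $s\ge 0$, the principal term is nonnegative and may be discarded; the divergence term $\io\Phi(T_K(u))\cdot\nabla G_s(u)$ \emph{vanishes}, because it equals $\io\dive(\Theta(u))$ for a Lipschitz primitive $\Theta$ with $\Theta(0)=0$, and $\Theta(u)\in W^{1,1}_0(\Omega;\rn)$ has zero integral of its divergence. There remains
\be
\io u\,G_s(u)\le\io \fn\,G_s(u),
\label{stampa}
\ee
that is $\int_{\{u>s\}}(u-|\fn|)(u-s)\le 0$; choosing $s\ge\|\fn\|_{\elle\infty}$ forces the integrand to be strictly positive on $\{u>s\}$, whence $|\{u>s\}|=0$. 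Arguing symmetrically from below one gets $\|u\|_{\elle\infty}\le\|\fn\|_{\elle\infty}$, a bound that does not depend on $K$.

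It then suffices to choose $K>\|\fn\|_{\elle\infty}$: for such $K$ one has $T_K(u)=u$ a.e., so $u$ solves \rife{ppn_0} exactly, and $\un:=u$ belongs to $\huz\cap\elle\infty$. I expect the main technical point to be the verification that the $\Phi$ term in the Stampacchia estimate \rife{stampa} vanishes, the structural fact that saves the argument, together with checking that the $\elle\infty$ bound does not deteriorate as $K\to\infty$; the degenerate coercivity itself is neutralised once the coefficient has been truncated.
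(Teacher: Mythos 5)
Your proposal is correct and follows essentially the same route as the paper: truncate the coefficient and the convection term at a level exceeding $\|f_n\|_{L^\infty(\Omega)}$, solve the resulting uniformly elliptic problem via Schauder, annihilate the $\Phi$ contribution in the sup-norm estimate by the divergence theorem, and conclude $\|u\|_{L^\infty(\Omega)}\le\|f_n\|_{L^\infty(\Omega)}$ so that the truncation is never active. The only cosmetic difference is the test function: the paper uses $(|w|-\|f_n\|_{L^\infty(\Omega)})_{+}\,\mathrm{sgn}(w)$ to treat both signs at once, where you use $(u-s)^{+}$ and its symmetric counterpart separately.
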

\begin{proof}
Let $M_n = \norma{f_n}{\elle\infty} + 1$, and consider the problem
\be\label{pbbase}
\left\{
\arrstre
\ba{cl}
\disp
-\dive\bigg(\frac{a(x)\nabla w }{(1+b(x)|T_{M_n}(w)|)^{2}}\bigg) + w = f_n-\dive (\Phi(T_{M_n}(w))) & \mbox{in $\Omega$,} \\
\hfill w = 0 \hfill & \mbox{on $\partial\Omega$.}
\ea
\right.
\ee
The existence of  a $\huz$ weak solution $w$  to \rife{pbbase} follows from Schauder's Theorem. Choosing $(|w| - \norma{f_n}{\elle\infty})_{+}\,{\rm sgn}(w)$ as a test function we obtain, dropping the nonnegative first term,
and using
the divergence Theorem on the last one, that $|w| \leq \norma{f_n}{\elle\infty} < M_n$. Therefore, $T_{M_n}(w) = w$, and $w$ is a bounded weak solution of \rife{ppn_0}.
\end{proof}
In the following result we are going to prove some 
a priori estimates on the solutions $u_n$ to problems
(\ref{ppn_0}).
\begin{lemma}
Let $u_n$ be the sequence of solutions to (\ref{ppn_0}). Then for every $k\geq 0$,
\be\label{aa}
\ik|\un|^{2} \leq \ik|f|^{2}\,;
\ee
\be\label{qq}
\lim_{k \to +\infty}\,{\rm meas}(\{|\un| \geq k\}) = 0\,\,
\mbox{uniformly with respect to $n$;}
\ee
\be\label{bb}
\alpha \io \frac{|\nabla u_n|^2}{(1+B|u_n|)^2}\leq \io |f|^2
\,;
\ee
\be\label{troncate}
\norma{\nabla T_k(u_n)}{L^2(\Omega)}^2\leq \frac{\norma{f}{L^1(\Omega)}}{\alpha}k{(1+Bk)^2}\,.
\ee
\end{lemma}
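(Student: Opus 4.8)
The plan is to read off all four bounds from the weak formulation of \rife{ppn_0}, tested against carefully chosen functions of $\un$, by exploiting two structural features. The first is that the lower order term in divergence form disappears: for any Lipschitz $\psi$ with $\psi(0)=0$, setting $H(s)=\int_0^s\Phi(\sigma)\psi'(\sigma)\,d\sigma$ one has $\Phi(\un)\cdot\nabla[\psi(\un)]=\dive(H(\un))$, so that $\io\Phi(\un)\cdot\nabla[\psi(\un)]=0$ by the divergence theorem, since $\un=0$ on $\partial\Omega$ and $H(0)=0$ (this is exactly the cancellation already used in the existence lemma). The second is that, by \rife{ab}, the diffusion coefficient satisfies $\frac{a(x)}{(1+b(x)|\un|)^{2}}\geq\frac{\alpha}{(1+B|\un|)^{2}}$, and that for nondecreasing $\psi$ the diffusion term $\io\frac{a\,\psi'(\un)|\nabla\un|^{2}}{(1+b|\un|)^{2}}$ is nonnegative. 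Note that each $\psi(\un)$ below lies in $\huz\cap\elle\infty$, hence is admissible.

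Two of the estimates are then immediate. For \rife{bb} I would test with $\un$ itself: the $\Phi$-term vanishes, $\io\un^{2}\geq 0$, so $\io\frac{a|\nabla\un|^{2}}{(1+b|\un|)^{2}}\leq\io\fn\un\leq\norma{f}{\elle2}\norma{\un}{\elle2}$, and the same identity gives $\norma{\un}{\elle2}\leq\norma{f}{\elle2}$; combining these and using the lower bound on the coefficient yields \rife{bb}. For \rife{troncate} I would test with $T_k(\un)$: the $\Phi$-term vanishes, $\io\un\,T_k(\un)\geq 0$, and since $|T_k(\un)|\leq k$ the diffusion term is bounded below by $\frac{\alpha}{(1+Bk)^{2}}\norma{\nabla T_k(\un)}{\elle2}^{2}$, while the right-hand side is at most $k\,\norma{f}{L^1(\Omega)}$; rearranging gives \rife{troncate}.

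The delicate point, which I expect to be the main obstacle, is \rife{aa}. The natural test function is $\un\,\chi_{\{|\un|\geq k\}}$, which is not admissible, so I would approximate the map $s\mapsto s\,\chi_{\{|s|\geq k\}}$ — which one checks is nondecreasing, the key fact that preserves the sign of the diffusion term — by a Lipschitz, odd, nondecreasing function $\phi_\delta$ with $\phi_\delta(0)=0$, equal to $s$ for $|s|\geq k+\delta$ and vanishing for $|s|\leq k$, and test with $\phi_\delta(\un)$. Dropping the nonnegative diffusion term and cancelling the $\Phi$-term leaves $\io\un\,\phi_\delta(\un)\leq\io\fn\,\phi_\delta(\un)$. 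Letting $\delta\to 0$ by dominated convergence (the integrands are controlled by $|\un|^{2}$ and $|f||\un|$), and using $|\fn|\leq|f|$ together with the Cauchy--Schwarz inequality on the right, I obtain $\ik|\un|^{2}\leq\big(\ik|f|^{2}\big)^{1/2}\big(\ik|\un|^{2}\big)^{1/2}$, whence \rife{aa}; a minor limiting argument ($k'<k$, $k'\uparrow k$) upgrades the open set $\{|\un|>k\}$ produced by $\phi_\delta$ to the closed one in the statement. Finally \rife{qq} follows from \rife{aa} by Chebyshev's inequality: on $\{|\un|\geq k\}$ one has $|\un|\geq k$, so $k^{2}\,{\rm meas}(\{|\un|\geq k\})\leq\ik|\un|^{2}\leq\io|f|^{2}$, which tends to $0$ as $k\to+\infty$ uniformly in $n$.
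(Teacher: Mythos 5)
Your proof is correct and follows essentially the same strategy as the paper: test \rife{ppn_0} with truncation-type functions of $\un$, kill the $\Phi$-term by the divergence theorem, and use the sign of the degenerate diffusion term; your $\phi_\delta(\un)$ plays exactly the role of the paper's $|\un|\,\psi_{i,k}(\un)$. The only difference is that you spell out the derivations of \rife{qq} and \rife{bb} (Chebyshev, and testing with $\un$), which the paper delegates to \cite{bco}.
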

\begin{proof}
Let $k \geq 0$, $i > 0$, and let $\psi_{i,k}(s)$ be the function defined by
$$
\psi_{i,k}(s) =
\left\{
\ba{cl}
0 & \mbox{if $0 \leq s \leq k$,}
\\
i(s-k) & \mbox{if $k < s \leq k + \frac1i$,}
\\
1& \mbox{if $s > k + \frac1i$,}
\\
\psi_{i,k}(s)=-\psi_{i,k}(-s) & \mbox{if $s < 0$.}
\ea
\right.
$$
The choice of $|\un|\,\psi_{i,k}(\un)$ as a test function in \rife{ppn_0} yields
$$
\arrstre
\ba{l}
\disp
\io \frac{a(x)|\nabla\un|^2}{(1 + b(x)|\un|)^{2}} |\psi_{i,k}(\un)|
+
\io \frac{a(x)|\nabla\un|^2}{(1 + b(x)|\un|)^{2}} \psi'_{i,k}(\un) |\un|
\\
\disp
+
\io \un |\un| \psi_{i,k}(\un)
=
\io \fn |\un| \psi_{i,k}(\un)+
\io
\Phi(u_n)\cdot\nabla (|\un|\,\psi_{i,k}(\un))
\,.
\ea
$$
By the divergence Theorem the last term is zero.
Since $\psi'_{i,k}(s) \geq 0$, we can drop the second term of the left hand side.
By (\ref{ab}) one gets
$$
\alpha\io \frac{|\nabla\un|^{2}}{(1 + b(x)|\un|)^{2}}\,|\psi_{i,k}(\un)|
+
\io \un |\un| \psi_{i,k}(\un)
\leq
\io |f| |\un| |\psi_{i,k}(\un)|
\,.
$$
We infer  (\ref{aa}) from this estimate
as in \cite{bco}, letting $i\to \infty$.
One can prove (\ref{qq}) and (\ref{bb}) with the same arguments as in \cite{bco}.

The choice of $T_k(u_n)$  as a test function in (\ref{ppn_0}) gives
$$
\frac{\alpha}{(1+Bk)^2}\norma{\nabla T_k(u_n)}{L^2(\Omega)}^2\leq k \,\norma{f}{L^1(\Omega)} + \io \Phi(u_n)\cdot \nabla T_k(u_n)
$$
by using (\ref{ab}) and dropping the positive term $\displaystyle \io u_nT_k(u_n)$.
By the divergence Theorem  the last integral is zero.
This implies (\ref{troncate}).
\end{proof}
The estimates proved in Lemma 4 can be used as
in \cite{bco} to prove the following result.
\begin{lemma}\label{convergenze}
Let $u_n$ be the solutions to (\ref{ppn_0}). Up to subsequences, the sequence $\{u_n\}$ converges to some function $u$
strongly  in $\elle2$ and weakly in $W^{1,1}_0(\Omega)$.
\end{lemma}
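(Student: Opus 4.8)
The plan is to read off every convergence from the a priori estimates of the previous lemma, bearing in mind that $W^{1,1}_0(\Omega)$ is \emph{not} reflexive, so the weak compactness of the gradients cannot follow from a bound alone but must be obtained through an equi-integrability (Dunford--Pettis) argument. I would first extract an almost everywhere convergent subsequence. By \rife{troncate}, for each fixed $k$ the truncations $T_k(\un)$ are bounded in $\huz$, hence relatively compact in $\elle2$ by the Rellich--Kondrachov theorem; a diagonal procedure over $k\in\enne$ yields a subsequence (still denoted $\un$) along which $T_k(\un)$ converges strongly in $\elle2$, and a.e., for every $k$. Using \rife{qq} to control $\mathrm{meas}(\{|\un|\geq k\})$ uniformly in $n$, one checks that $\{\un\}$ is Cauchy in measure: for $\delta>0$ the set $\{|\un-u_m|>\delta\}$ is contained in $\{|\un|\geq k\}\cup\{|u_m|\geq k\}\cup\{|T_k(\un)-T_k(u_m)|>\delta\}$, and the three pieces are small for $k$, and then $n,m$, large. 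Thus $\un$ converges in measure, and up to a further subsequence a.e., to some measurable $u$.

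Next I would upgrade this to strong convergence in $\elle2$. Taking $k=0$ in \rife{aa} gives $\io|\un|^2\leq\io|f|^2$, so $\{\un\}$ is bounded in $\elle2$, whence $u\in\elle2$ by Fatou's lemma. Moreover \rife{aa} and \rife{qq} together show that the tails $\ik|\un|^2\leq\ik|f|^2$ are small uniformly in $n$ as $k\to+\infty$, since $|f|^2\in\elle1$ has absolutely continuous integral while the measures of $\{|\un|\geq k\}$ tend to zero uniformly. Hence $\{|\un|^2\}$ is equi-integrable, and Vitali's theorem combined with the a.e. convergence gives $\un\to u$ strongly in $\elle2$.

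Finally I would prove the weak convergence in $W^{1,1}_0(\Omega)$. The key is the Cauchy--Schwarz splitting
$$
\io |\nabla \un| \leq \left(\io \frac{|\nabla \un|^2}{(1+B|\un|)^2}\right)^{\frac12}\left(\io (1+B|\un|)^2\right)^{\frac12},
$$
whose first factor is bounded by \rife{bb} and whose second factor is bounded because $\{\un\}$ is bounded in $\elle2$; this yields a uniform $L^1$ bound on $\nabla\un$. Replacing $\Omega$ by an arbitrary measurable set $E$ in the second factor, and using that $\{(1+B|\un|)^2\}$ is equi-integrable (a consequence of the equi-integrability of $\{|\un|^2\}$ proved above), one gets the equi-integrability of $\{\nabla\un\}$ in $L^1(\Omega)^N$. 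By the Dunford--Pettis theorem $\{\nabla\un\}$ is then relatively weakly compact in $L^1$, so along a subsequence $\nabla\un$ converges weakly in $L^1$ to some $G$; since $\un\to u$ in $\elle2$, hence in $\mathcal{D}'(\Omega)$, one identifies $G=\nabla u$, so that $u\in W^{1,1}_0(\Omega)$ (the subspace being weakly closed) and $\un\rightharpoonup u$ weakly in $W^{1,1}_0(\Omega)$.

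I expect the main obstacle to be precisely this last weak $L^1$ compactness of the gradients: because the natural bound \rife{bb} is a \emph{weighted} $L^2$ estimate that degenerates as $|\un|\to+\infty$, neither the uniform $L^1$ bound nor, above all, the equi-integrability required by Dunford--Pettis comes for free, and both rest on transferring the equi-integrability of $\{|\un|^2\}$ through the Cauchy--Schwarz inequality.
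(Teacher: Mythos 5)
Your proof is correct and is exactly the argument the paper intends: the paper does not write out this proof but delegates it to \cite{bco}, and the estimates \rife{aa}, \rife{qq}, \rife{bb}, \rife{troncate} are tailored precisely for the steps you carry out (a.e.\ convergence via truncations and convergence in measure, Vitali for the strong $\elle2$ convergence, and the Cauchy--Schwarz splitting of $|\nabla\un|$ plus Dunford--Pettis for the weak $W^{1,1}_0(\Omega)$ compactness). Your closing remark correctly identifies the only delicate point, namely that equi-integrability of the gradients must be inherited from that of $\{|\un|^2\}$ through the weighted estimate \rife{bb}.
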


We are going to prove Theorem \ref{thm-lineare}.
\begin{proof}
Let $u_n$ and $u$ be as in Lemma \ref{convergenze}.
We now pass to the limit in the approximate problems \rife{ppn_0}.
The lower order term on the left hand side and the first term of right hand side easily pass to the limit,
due to the $L^2(\Omega)$ convergence of $\un$ to $u$ and of $\fn$ to $f$.
For the operator term one can pass to the limit as in \cite{bco}.

For the last term, since $u_n$ converges to $u$
in $L^{2}(\Omega)$ and thus a.e. in $\Omega$,  and $\Phi$ is continuous, $\Phi(u_n)\to \Phi(u)$ a.e. in $\Omega$.
Moreover, if $E$ is any measurable subset of $\Omega$ we have, by (\ref{crescitaPhi}),
$$
\int\limits_{E}|\Phi(u_n)|\leq  C \int\limits_{E}|u_n|^2\,.
$$
The last term tends to 0, as ${\rm meas}(E)\to 0$, uniformly with respect to $n$, by Vitali's Theorem.
 Again by Vitali's Theorem, we deduce that $\Phi(u_n)\to \Phi(u)$
in $(L^1(\Omega))^N$. This allows us to pass to the limit in the last term.
\end{proof}
We are now going to prove Theorem \ref{entropysolutions}.
\begin{proof}
We consider $T_k(u_n-\varphi)$ as a test function in (\ref{ppn_0}) and we pass to the limit as $n\to \infty$.
We can write the operator term as
$$
\io \frac{a(x)}{(1+b(x)|u_n|)^2}|\nabla T_k(u_n-\varphi)|^2 +
\io \frac{a(x)}{(1+b(x)|u_n|)^2}\nabla \varphi\cdot \nabla T_k(u_n-\varphi)\,.
$$
Estimate (\ref{troncate}) and the a.e. convergence of $u_n$ to $u$ imply that $T_k(u_n-\varphi)\to T_k(u-\varphi)$ weakly in $H^1_0(\Omega)$. Since $\displaystyle \frac{a(x)}{(1+b(x)|u_n|)^2}$ is bounded in $\Omega$, we deduce  that
$$
\liminf\limits_{n\to \infty}
\io \frac{a(x)|\nabla T_k(u_n-\varphi)|^2}{(1+b(x)|u_n|)^2}
\geq
\io \frac{a(x)|\nabla T_k(u-\varphi)|^2}{(1+b(x)|u|)^2}\,.
$$
For the second term one has
$$
\io \frac{a(x)}{(1+b(x)|u_n|)^2}\nabla \varphi\cdot \nabla T_k(u_n-\varphi)\to
\io \frac{a(x)}{(1+b(x)|u|)^2}\nabla \varphi\cdot \nabla T_k(u-\varphi)
$$
since $T_k(u_n-\varphi)\to T_k(u-\varphi)$ weakly in $H^1_0(\Omega)$
and $\displaystyle\frac{a(x)}{(1+b(x)|u_n|)^2}\nabla \varphi\to \frac{a(x)}{(1+b(x)|u|)^2}\nabla \varphi$ in $(L^2(\Omega))^N$
by Lebesgue's Theorem.

By the  $L^2(\Omega)$ convergences of $u_n$ to $u$  and $f_n$ to $f$ we deduce that
$$
\io u_n\,T_k(u_n-\varphi)\to \io u\,T_k(u-\varphi)\,,\qquad
\io f_n\,T_k(u_n-\varphi)\to \io f\,T_k(u-\varphi)\,.
$$
Let us now study the last term:
$
\displaystyle \io \Phi(u_n)\cdot \nabla T_k(u_n-\varphi)
$. This is non zero only in $\{|u_n-\varphi|\leq k\}$. On this set $\Phi(u_n)$ is bounded, by the continuity of $\Phi$.
By the weak $H^1_0(\Omega)$ convergence of $T_k(u_n-\varphi)$ to $T_k(u_n-\varphi)$ we deduce that
$$
\io \Phi(u_n)\cdot \nabla T_k(u_n-\varphi)\to
\io \Phi(u)\cdot \nabla T_k(u-\varphi)\,,
$$
as desired.
\end{proof}
\section*{acknowledgments}
This work contains
the unpublished part of the lecture
({\sl Dirichlet problems with singular
convection terms and applications 
 with and without  
  Ildefonso}. Toledo, 
  June 16, 2011.)
 of the first author at the Conference
``Nonlinear Models in Partial Differential Equations
(An international congress on occasion of Jesus Ildefonso Diaz's 60th birthday)''

\end{document}